\documentclass[reqno]{amsart}

\usepackage{amsmath}
\usepackage{amsthm}
\usepackage{amscd}
\usepackage{bbm}
\usepackage{enumerate}
\usepackage{amssymb}
\usepackage{palatino}
\usepackage{amscd}
\usepackage{verbatim}
\usepackage{mathrsfs}
\usepackage[margin=1.5in]{geometry}

\subjclass[2010]{05A17 (primary), 11P55, 11P81 (secondary).}

\keywords{Unimodality, partitions, circle method, Young's lattice}

\title{Proof of a conjecture of Stanley-Zanello}
\author{Levent Alpoge}\thanks{Email: alpoge@college.harvard.edu.}\email{alpoge@college.harvard.edu}
\address{Quincy House, Harvard College, Cambridge, MA 02138.}

\begin{document}

\begin{abstract}
We prove that the number of partitions of an integer into at most $b$ distinct parts of size at most $n$ forms a unimodal sequence for $n$ sufficiently large with respect to $b$. This resolves a recent conjecture of Stanley and Zanello.
\end{abstract}

\maketitle

\newtheoremstyle{dotless}{}{}{\itshape}{}{\bfseries}{}{ }{}

\newtheorem{thm}{Theorem}
\newtheorem{lem}[thm]{Lemma}
\newtheorem{remark}[thm]{Remark}
\newtheorem{cor}[thm]{Corollary}
\newtheorem{defn}[thm]{Definition}
\newtheorem{prop}[thm]{Proposition}
\newtheorem{conj}[thm]{Conjecture}
\newtheorem{claim}[thm]{Claim}
\newtheorem{exer}[thm]{Exercise}
\newtheorem{fact}[thm]{Fact}

\theoremstyle{dotless}

\newtheorem{thmnodot}[thm]{Theorem}
\newtheorem{lemnodot}[thm]{Lemma}
\newtheorem{cornodot}[thm]{Corollary}

\newcommand{\image}{\mathop{\text{image}}}
\newcommand{\End}{\mathop{\text{End}}}
\newcommand{\Hom}{\mathop{\text{Hom}}}
\newcommand{\Sum}{\displaystyle\sum\limits}
\newcommand{\Prod}{\displaystyle\prod\limits}
\newcommand{\Tr}{\mathop{\mathrm{Tr}}}
\renewcommand{\Re}{\operatorname{\mathfrak{Re}}}
\renewcommand{\Im}{\operatorname{\mathfrak{Im}}}
\newcommand{\im}{\mathrm{im}\,}
\newcommand{\inner}[1]{\langle #1 \rangle}
\newcommand{\pair}[2]{\langle #1, #2\rangle}
\newcommand{\ppair}[2]{\langle\langle #1, #2\rangle\rangle}
\newcommand{\Pair}[2]{\left[#1, #2\right]}
\newcommand{\Char}{\mathop{\mathrm{char}}}
\newcommand{\rank}{\mathop{\mathrm{rank}}}
\newcommand{\sgn}[1]{\mathop{\mathrm{sgn}}(#1)}
\newcommand{\leg}[2]{\left(\frac{#1}{#2}\right)}
\newcommand{\Sym}{\mathrm{Sym}}
\newcommand{\hmat}[2]{\left(\begin{array}{cc} #1 & #2\\ -\bar{#2} & \bar{#1}\end{array}\right)}
\newcommand{\HMat}[2]{\left(\begin{array}{cc} #1 & #2\\ -\overline{#2} & \overline{#1}\end{array}\right)}
\newcommand{\Sin}[1]{\sin{\left(#1\right)}}
\newcommand{\Cos}[1]{\cos{\left(#1\right)}}
\newcommand{\comm}[2]{\left[#1, #2\right]}
\newcommand{\Isom}{\mathop{\mathrm{Isom}}}
\newcommand{\Map}{\mathop{\mathrm{Map}}}
\newcommand{\Bij}{\mathop{\mathrm{Bij}}}
\newcommand{\Z}{\mathbb{Z}}
\newcommand{\R}{\mathbb{R}}
\newcommand{\Q}{\mathbb{Q}}
\newcommand{\C}{\mathbb{C}}
\newcommand{\Nm}{\mathrm{Nm}}
\newcommand{\RI}[1]{\mathcal{O}_{#1}}
\newcommand{\F}{\mathbb{F}}
\renewcommand{\Pr}{\displaystyle\mathop{\mathrm{Pr}}\limits}
\newcommand{\E}{\mathbb{E}}
\newcommand{\coker}{\mathop{\mathrm{coker}}}
\newcommand{\id}{\mathop{\mathrm{id}}}
\newcommand{\Oplus}{\displaystyle\bigoplus\limits}
\renewcommand{\Cap}{\displaystyle\bigcap\limits}
\renewcommand{\Cup}{\displaystyle\bigcup\limits}
\newcommand{\Bil}{\mathop{\mathrm{Bil}}}
\newcommand{\N}{\mathbb{N}}
\newcommand{\Aut}{\mathop{\mathrm{Aut}}}
\newcommand{\ord}{\mathop{\mathrm{ord}}}
\newcommand{\ch}{\mathop{\mathrm{char}}}
\newcommand{\minpoly}{\mathop{\mathrm{minpoly}}}
\newcommand{\Spec}{\mathop{\mathrm{Spec}}}
\newcommand{\Gal}{\mathop{\mathrm{Gal}}}
\newcommand{\Ad}{\mathop{\mathrm{Ad}}}
\newcommand{\Stab}{\mathop{\mathrm{Stab}}}
\newcommand{\Norm}{\mathop{\mathrm{Norm}}}
\newcommand{\Orb}{\mathop{\mathrm{Orb}}}
\newcommand{\pfrak}{\mathfrak{p}}
\newcommand{\qfrak}{\mathfrak{q}}
\newcommand{\mfrak}{\mathfrak{m}}
\newcommand{\Frac}{\mathop{\mathrm{Frac}}}
\newcommand{\Loc}{\mathop{\mathrm{Loc}}}
\newcommand{\Sat}{\mathop{\mathrm{Sat}}}
\newcommand{\inj}{\hookrightarrow}
\newcommand{\surj}{\twoheadrightarrow}
\newcommand{\bij}{\leftrightarrow}
\newcommand{\Ind}{\mathrm{Ind}}
\newcommand{\Supp}{\mathop{\mathrm{Supp}}}
\newcommand{\Ass}{\mathop{\mathrm{Ass}}}
\newcommand{\Ann}{\mathop{\mathrm{Ann}}}
\newcommand{\Krulldim}{\dim_{\mathrm{Kr}}}
\newcommand{\Avg}{\mathop{\mathrm{Avg}}}
\newcommand{\innerhom}{\underline{\Hom}}
\newcommand{\triv}{\mathop{\mathrm{triv}}}
\newcommand{\Res}{\mathrm{Res}}
\newcommand{\eval}{\mathop{\mathrm{eval}}}
\newcommand{\MC}{\mathop{\mathrm{MC}}}
\newcommand{\Fun}{\mathop{\mathrm{Fun}}}
\newcommand{\InvFun}{\mathop{\mathrm{InvFun}}}
\renewcommand{\ch}{\mathop{\mathrm{ch}}}
\newcommand{\irrep}{\mathop{\mathrm{Irr}}}
\newcommand{\len}{\mathop{\mathrm{len}}}
\newcommand{\SL}{\mathrm{SL}}
\newcommand{\GL}{\mathrm{GL}}
\newcommand{\PSL}{\mathrm{SL}}
\newcommand{\actson}{\curvearrowright}
\renewcommand{\H}{\mathbb{H}}
\newcommand{\mat}[4]{\left(\begin{array}{cc} #1 & #2\\ #3 & #4\end{array}\right)}
\newcommand{\interior}{\mathop{\mathrm{int}}}
\newcommand{\floor}[1]{\lfloor #1\rfloor}
\newcommand{\iso}{\cong}
\newcommand{\eps}{\epsilon}
\newcommand{\disc}{\mathrm{disc}}
\newcommand{\Frob}{\mathrm{Frob}}
\newcommand{\charpoly}{\mathrm{charpoly}}
\newcommand{\afrak}{\mathfrak{a}}
\newcommand{\cfrak}{\mathfrak{c}}
\newcommand{\codim}{\mathrm{codim}}
\newcommand{\ffrak}{\mathfrak{f}}
\newcommand{\Pfrak}{\mathfrak{P}}
\newcommand{\homcont}{\hom_{\mathrm{cont}}}
\newcommand{\vol}{\mathrm{vol}}
\newcommand{\ofrak}{\mathfrak{o}}
\newcommand{\A}{\mathbb{A}}
\newcommand{\I}{\mathbb{I}}
\newcommand{\invlim}{\varprojlim}
\newcommand{\dirlim}{\varinjlim}
\renewcommand{\ch}{\mathrm{char}}
\newcommand{\artin}[2]{\left(\frac{#1}{#2}\right)}
\newcommand{\Qfrak}{\mathfrak{Q}}
\newcommand{\ur}[1]{#1^{\mathrm{ur}}}
\newcommand{\absnm}{\mathcal{N}}
\newcommand{\ab}[1]{#1^{\mathrm{ab}}}
\newcommand{\G}{\mathbb{G}}
\newcommand{\dfrak}{\mathfrak{d}}
\newcommand{\Bfrak}{\mathfrak{B}}
\renewcommand{\sgn}{\mathrm{sgn}}
\newcommand{\disjcup}{\bigsqcup}
\newcommand{\zfrak}{\mathfrak{z}}
\renewcommand{\Tr}{\mathrm{Tr}}
\newcommand{\reg}{\mathrm{reg}}
\newcommand{\subgrp}{\leq}
\newcommand{\normal}{\vartriangleleft}
\newcommand{\Dfrak}{\mathfrak{D}}
\newcommand{\nvert}{\nmid}
\newcommand{\K}{\mathbb{K}}
\newcommand{\pt}{\mathrm{pt}}
\newcommand{\RP}{\mathbb{RP}}
\newcommand{\CP}{\mathbb{CP}}
\newcommand{\rk}{\mathop{\mathrm{rk}}}
\newcommand{\redH}{\tilde{H}}
\renewcommand{\H}{\tilde{H}}
\newcommand{\Cyl}{\mathrm{Cyl}}
\newcommand{\T}{\mathbb{T}}
\newcommand{\Ab}{\mathrm{Ab}}
\newcommand{\Vect}{\mathrm{Vect}}
\newcommand{\Top}{\mathrm{Top}}
\newcommand{\Nat}{\mathrm{Nat}}
\newcommand{\inc}{\mathrm{inc}}
\newcommand{\Tor}{\mathrm{Tor}}
\newcommand{\Ext}{\mathrm{Ext}}
\newcommand{\fungrpd}{\pi_{\leq 1}}
\newcommand{\slot}{\mbox{---}}
\newcommand{\funct}{\mathcal}
\newcommand{\Funct}{\mathcal{F}}
\newcommand{\Gunct}{\mathcal{G}}
\newcommand{\FunCat}{\mathrm{Funct}}
\newcommand{\Rep}{\mathrm{Rep}}
\newcommand{\Specm}{\mathrm{Specm}}
\newcommand{\ev}{\mathrm{ev}}
\newcommand{\frpt}[1]{\{#1\}}
\newcommand{\h}{\mathscr{H}}
\newcommand{\poly}{\mathrm{poly}}
\newcommand{\Partial}[1]{\frac{\partial}{\partial #1}}
\newcommand{\Cont}{\mathrm{Cont}}
\renewcommand{\o}{\ofrak}
\newcommand{\bfrak}{\mathfrak{b}}
\newcommand{\Cl}{\mathrm{Cl}}
\newcommand{\ceil}[1]{\lceil #1\rceil}
\newcommand{\hfrak}{\mathfrak{h}}

\let\uglyphi\phi
\let\phi\varphi

\tableofcontents

\section{Introduction}

%Here we prove that the number of partitions of an integer into at most $b$ distinct parts of size at most $n$ forms a unimodal sequence for $n$ %sufficiently large with respect to $b$. This resolves a conjecture of Stanley and Zanello (cf.\ Conjecture 3.9 of \cite{stanleyzanello}).

The unimodality (that is, weak increase followed by weak decrease of the coefficients) of the Gaussian binomial coefficients ${n\choose k}_q$ is a classical result with many proofs, the first given by Sylvester in his work on invariant theory. By interpreting the coefficient of $q^\ell$ in ${n\choose k}_q$ as the number of partitions of $\ell$ with Young diagram fitting inside an $(n-k)\times k$ box, one is naturally led to ask similar questions on unimodality upon changing the ambient shape from a box to something more exotic. Stanton \cite{stanton} was the first to study such questions, obtaining various infinite families of partitions leading to nonunimodal sequences, as well as unimodality results for partitions with at most three parts. Stanley and Zanello \cite{stanleyzanello} then considered the question of counting partitions \emph{with distinct parts} fitting inside these shapes, allowing them to progress on analogous problems. In their paper they stated various conjectures in this direction. In this work we prove one of their conjectures, having to do with partitions with distinct parts fitting inside ``truncated staircases''.

\section{Preliminaries}

By $A\ll_\theta B$ we mean $|A|\leq C|B|$ for some positive constant $C$ potentially depending on $\theta$. By $A\asymp B$ we mean $A\ll B$ and $B\ll A$. We will use the notation $e(z):=\exp(2\pi i z)$, and for us the Fourier transform is \begin{align}\hat{f}(\xi):=\int_\R f(x)e(-x\xi)dx.\end{align} For us $O(B)$ will denote a quantity $\ll B$, and $f*g(x) = \int_\R f(t) g(x-t) dt$ will denote the convolution of $f$ and $g$.

Recall that a sequence $a_i$ is \emph{unimodal} if there is some $k$ for which $a_1\leq\cdots\leq a_k\geq\cdots$. We will call a polynomial unimodal if its coefficients (in increasing order of degree) form a unimodal sequence.

The Gaussian binomial coefficient is defined as \begin{align}{n\choose b}_q:=\frac{\prod_{k=0}^{b-1} (1-q^{n-k})}{\prod_{k=1}^b (1-q^k)}.\end{align} The coefficient of $q^\ell$ in ${n\choose b}_q$ is the number of partitions of $\ell$ into at most $b$ parts each of size at most $n-b$. It is a theorem of Sylvester (essentially proved in 1878 after it was conjectured by Cayley some 20 years before) that the coefficients of $q$ in ${n\choose b}_q$ form a unimodal sequence --- that is, the polynomial ${n\choose b}_q$ is \emph{unimodal}.

\section{Main result}

In their paper about unimodality of partitions with distinct parts with Young diagrams fitting inside certain shapes, Stanley and Zanello \cite{stanleyzanello} conjecture that the number of partitions into at most $b$ distinct parts each of size at most $n$ forms a unimodal sequence for $n$ large with respect to $b$. We prove this conjecture here.

To set notation, let $\lambda_{n,b}:=(n,\ldots,n-b+1)$. Let $c_{n,b}(\ell)$ be the number of partitions of $\ell$ into distinct parts with Young diagrams fitting inside $\lambda_{n,b}$ (that is, those partitions $\lambda_1 > \cdots > \lambda_r$ with $r\leq b$, $\lambda_1\leq n$, and $\sum \lambda_i = \ell$). Let $F_{\lambda_{n,b}}(q):=\sum_{\ell\geq 0} c_{n,b}(\ell) q^\ell$, the \emph{rank-generating function} of the $c_{n,b}(\ell)$.

\begin{thmnodot}[Cf.\ Conjecture 3.9 of \cite{stanleyzanello}.]\label{maintheorem}
For $n\gg_b 1$, $F_{\lambda_{n,b}}(q)$ is unimodal.
\end{thmnodot}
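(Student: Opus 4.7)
The plan is to apply the circle method starting from the identity
\begin{align*}
F_{\lambda_{n,b}}(q) = \sum_{k=0}^b q^{k(k+1)/2} {n \choose k}_q,
\end{align*}
obtained by stratifying strict partitions fitting in $\lambda_{n,b}$ by their number $k$ of positive parts and applying the bijection $\lambda_i \mapsto \lambda_i - (k-i+1)$ with weak partitions in the $(n-k) \times k$ box. Write $p_k(\ell) := [q^\ell]\,q^{k(k+1)/2}{n \choose k}_q$; then $c_{n,b}(\ell) = \sum_{k=0}^b p_k(\ell)$, and by Sylvester's theorem each $p_k$ is palindromic and unimodal with center $\mu_k := k(n+1)/2$.

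Using Fourier inversion,
\begin{align*}
c_{n,b}(\ell) = \int_{-1/2}^{1/2} F_{\lambda_{n,b}}(e(\theta))\,e(-\ell\theta)\,d\theta,
\end{align*}
I would split the integration range at $|\theta| = \theta_0 := n^{-1+\epsilon}$. On the major arc, Taylor expanding each ${n\choose k}_{e(\theta)}$ around $\theta=0$ (equivalently, the central limit theorem for $|\mu|$ with $\mu$ uniform on the $(n-k) \times k$ box of partitions) yields a Gaussian approximation
\begin{align*}
{n\choose k}_{e(\theta)} \approx {n\choose k}\,e\!\left(\tfrac{k(n-k)}{2}\theta\right)\exp(-2\pi^2\sigma_k^2\theta^2)
\end{align*}
with $\sigma_k^2$ of order $n^2$. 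Inverting and summing, $c_{n,b}(\ell)$ is accurately approximated by a superposition of Gaussians of amplitudes of order $n^{k-1}$ centered at $\mu_k$; the $k=b$ term dominates by a factor of order $n$. An explicit calculation locates the peak $\ell^\star$ of $c_{n,b}$ within $O(1)$ of $\mu_b$, with the shift coming from balancing the leading $p_b$-Gaussian's derivative against the past-peak derivative of $p_{b-1}$ at $\ell \approx \mu_b$. For $\ell$ outside a window of width $\sqrt{n} \log n$ around $\ell^\star$, the $p_b$ contribution to $c_{n,b}(\ell+1) - c_{n,b}(\ell)$ dominates all corrections, giving monotonicity via Sylvester unimodality of $p_b$; inside the window, the refined Gaussian asymptotics together with a careful treatment of discretization effects (the differences $p_k(\ell+1)-p_k(\ell)$ are integers, not well-captured by the smooth Gaussian derivative) give unimodality throughout.

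The main obstacle is obtaining a minor arc bound of $o(n^{b-3})$ uniformly in $\ell$---below the leading major-arc contribution inside the critical window around $\ell^\star$. Pointwise estimates via $|1-e(m\theta)| = 2|\sin(\pi m\theta)|$ applied to each ${n \choose k}_{e(\theta)}$ handle most of $|\theta| > \theta_0$, but deteriorate near rationals with small denominator---in particular near $\theta = 1/2$, where a direct computation gives $|F_{\lambda_{n,b}}(-1)|$ of order $n^{b/2}$, not trivially negligible. A Farey dissection of the minor arc is required: on each arc around a rational $a/r$ with small denominator $r$, writing $\theta = a/r + \eta$ and letting $\zeta := e(a/r)$, I would exploit cancellation from both the oscillation $e(-\ell\theta)$ and the alternating structure of the sum $\sum_k \zeta^{k(k+1)/2}{n \choose k}_{\zeta\,e(\eta)}$ in $k$. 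Executing this Farey analysis, for which the hypothesis $n \gg_b 1$ is essential, is the technical heart of the proof.
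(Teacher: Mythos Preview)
Your proposal correctly identifies the decomposition $F_{\lambda_{n,b}}(q)=\sum_k q^{k(k+1)/2}{n\choose k}_q$ and the dominance of the $k=b$ summand, but it has a genuine gap exactly where you place it: the minor arc. You need accuracy of order $n^{b-3}$ (indeed $n^{2b-5}$ for the second-difference quantity controlling unimodality near the peak), yet you already note that $|F_{\lambda_{n,b}}(-1)|\asymp n^{\lfloor b/2\rfloor}$; for $b=5,6$ this is not below the target, and the Farey dissection with ``cancellation in $k$'' is neither executed nor clearly workable. Nothing in the plan explains how the oscillation $e(-\ell\theta)$ produces the required uniform savings over the whole bulk range of $\ell$.

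The paper avoids this entirely by a contour shift: integrate over $\theta+i\alpha$ with $\alpha=1/n$ rather than over real $\theta$. Then $1-q^{n-k}=1-e(n\theta+i)+O_b(|\theta|+n^{-1})$ is uniformly bounded away from $0$ (since $|e(n\theta+i)|=e^{-2\pi}$), while $1-q^k\asymp|\theta+i\alpha|$, so the integrand is $\asymp|\theta+i\alpha|^{m-b}$ with no bad points whatsoever --- there is no minor arc. One Taylor expansion and the rescaling $\theta\mapsto\theta/n$ give
\[
f^{(m)}(\ell)=\frac{n^{b-m-1}}{b!}\,I^{(m)}\!\Bigl(\tfrac{\ell}{n}-\tfrac{b(b+1)}{2n}\Bigr)+O_b\bigl(n^{b-m-2}\bigr),
\]
where $I=\chi_{[0,1]}^{*b}$ is the Irwin--Hall density, \emph{not} a Gaussian. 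Your Gaussian is only the CLT approximation to $I$ near $b/2$; on the full bulk $[1,b-1]$ it is the wrong profile.

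Finally, the paper proves \emph{log-concavity} of the smooth extension $f$ on the whole bulk, rather than locating the peak and arguing separately on each side. This reduces to $(I')^2-II''\gg_b 1$ on $[\tfrac12,b-\tfrac12]$, which follows from a short Pr\'ekopa--Leindler lemma (convolution preserves strict log-concavity) applied to $I=(\chi_{[0,1]}^{*4})^{*(\lfloor b/4\rfloor-1)}*\chi_{[0,1]}^{*(4+(b\bmod 4))}$. The tails $\ell<n$ and $\ell>(b-1)n$ are dispatched by a two-line injection on partitions. There is no peak-balancing, no discretization analysis, and no Farey dissection.
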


The hypothesis $n\gg_b 1$ is in fact necessary, as Stanley and Zanello note --- for $n=19, b=6$ the claim fails. It is a theorem of Dynkin that for $n\leq b$ (that is, for ``nontruncated staircases'') the polynomial $F_{\lambda_{n,b}}$ is unimodal. We will prove Theorem \ref{maintheorem} by employing analytic techniques.

The following is an outline of the arguments we will employ. In the first place, for coefficients near the ``edges'' (i.e., near the constant term or the top term), we will employ elementary methods to show the desired inequalities. Next, for the remaining coefficients (all of which are a distance of order $n$ from the edges) we write the coefficient $c_{n,b}(\ell)$ as a smooth function of a real variable $\ell$, say $f(\ell)$. We show $f$ is log-concave once $n\gg_b 1$ by calculating $(-\log{f})''(x)$ to leading order in $n$, getting $\gamma_b(x)\cdot n^{-2} + O_b(n^{-3})$, where $\gamma_b > 0$ is a strictly positive function of $x$ depending only on $b$. By our work at the edges we may restrict to $x$ to a subinterval, whence by compactness this is positive for $n\gg_b 1$, finishing the proof.

\section{Proof of Theorem \ref{maintheorem}}

Stanley and Zanello handle the cases $b\leq 4$ in their paper, so we are free to assume $b\geq 5$.

\subsection{Handling the tails.}

First we prove that \begin{align}c_{n,b}(0)\leq\cdots\leq c_{n,b}(n)\end{align} and \begin{align}c_{n,b}((b-1)n)\geq\cdots\geq c_{n,b}\left(bn - \frac{b(b-1)}{2}\right).\end{align} For the first claim, suppose $\ell < n$. Let \begin{align}\lambda =: \lambda_1 > \cdots > \lambda_r\end{align} be a partition of $\ell$ into $r\leq b$ parts. Of course $\lambda_1\leq \ell < n$ so $\lambda$ automatically fits into $\lambda_{n,b}$. But then so does \begin{align}\mu:=(\lambda_1 + 1) > \lambda_2 > \cdots > \lambda_r.\end{align} This procedure is of course injective, whence the claimed chain of inequalities.

For the second claim, if $\ell > (b-1)n$ and \begin{align}\lambda =: \lambda_1 > \cdots > \lambda_r\end{align} is a partition of $\ell$ into $r\leq b$ parts with $\lambda_1\leq n$, then $r = b$ since \begin{align}(b-1)n < \ell = \sum \lambda_i\leq r\lambda_1\leq rn.\end{align} Hence we may form the partition \begin{align}\mu:=\lambda_1 > \cdots > \lambda_{b-1} > (\lambda_b - 1)\end{align} (omitting the last term if it is zero) of $\ell - 1$. Again this is evidently injective, whence the second claim follows.

So in what follows we'll take $\frac{1728}{1729}\leq \frac{\ell}{n}\leq b-\frac{1728}{1729}$.\footnote{The constant $\frac{1728}{1729}$ does not matter --- any constant sufficiently close to $1$ will do.} In this range we will prove that, in fact, $c_{n,b}(\ell)$ is logarithmically concave --- that is, \begin{align}c_{n,b}(\ell)^2\geq c_{n,b}(\ell-1)\cdot c_{n,b}(\ell+1).\end{align} (Note that this immediately implies unimodality.)

To do this we will calculate $c_{n,b}(\ell)$ to leading order in $n$ and prove the inequality at the level of leading terms. Taking $n$ large will yield the inequality for $c_{n,b}(\ell)$ proper.

\subsection{Handling the bulk.}

Before beginning it is worth remarking that, heuristically, for $\ell\asymp_b n$ we expect the number of partitions of $\ell$ into at most $b$ distinct parts of size at most $n$ to be basically governed by the distribution of a sum of $b$ uniform random variables on $[0,n]$. We will see that, to leading order, this is exactly the case.

Now let us begin the calculation. Again, we recall that now $\ell\asymp_b n$.

Let $\alpha:=\frac{1}{n}$.\footnote{This is the optimal order of magnitude for $\alpha$ for our purposes. Its choice should be thought of as adhering to the method of steepest descent.}

Of course \begin{align}c_{n,b}(\ell) = \int_{-\frac{1}{2}}^{\frac{1}{2}} F_{\lambda_{n,b}}(e(\theta + i\alpha)) e(-\ell(\theta + i\alpha))d\theta.\end{align}

Now, a partition of $\ell$ into distinct parts $n\geq \lambda_1 > \cdots > \lambda_r$ with $r\leq b$ is precisely equivalent to the partition $(\lambda_1 - r)\geq (\lambda_2 - r + 1)\geq\cdots\geq (\lambda_{r-1} - 2)\geq (\lambda_r - 1)$ (omitting zeroes if there are any) of $\ell - \frac{r(r+1)}{2}$ of size at most $n - r$. That is, the partitions of $\ell$ fitting inside $\lambda_{n,b}$ with \emph{exactly} $r$ parts are equinumerous with those partitions of $\ell - \frac{r(r+1)}{2}$ fitting inside an $(n-r)\times r$ box. Hence the contribution of those partitions with exactly $r$ parts to $F_{\lambda_{n,b}}(q)$ is $q^{\frac{r(r+1)}{2}}{n\choose r}_q$. That is to say, \begin{align}F_{\lambda_{n,b}}(q) = \sum_{a=0}^b q^{\frac{a(a+1)}{2}}{n\choose a}_q.\end{align}

Hence we see that \begin{align}c_{n,b}(\ell) = \sum_{a=1}^b \int_{-\frac{1}{2}}^{\frac{1}{2}} d\theta\ \frac{\prod_{k=0}^{a-1} (1 - e(\theta + i\alpha)^{n-k})}{\prod_{k=1}^a (1 - e(\theta + i\alpha)^k)}\cdot e\left(-\left(\ell - \frac{a(a+1)}{2}\right)(\theta + i\alpha)\right).\end{align} We have dropped the $a=0$ term for convenience (as we well may, since $\ell > 2$).

To prove the inequality $c_{n,b}(\ell)^2\geq c_{n,b}(\ell-1)\cdot c_{n,b}(\ell+1)$, observe that the integral formula for $c_{n,b}(\ell)$ extends to a function (let us call it $f(\ell)$) of a \emph{real} variable $\ell$.\footnote{Note that $f$ is also real-valued, since complex conjugation amounts to $\theta\mapsto -\theta$ in the integral.} That is, \begin{align}f(\ell):=\sum_{a=1}^b \int_{-\frac{1}{2}}^{\frac{1}{2}} d\theta\ \frac{\prod_{k=0}^{a-1} (1 - e(\theta + i\alpha)^{n-k})}{\prod_{k=1}^a (1 - e(\theta + i\alpha)^k)}\cdot e\left(-\left(\ell - \frac{a(a+1)}{2}\right)(\theta + i\alpha)\right).\end{align} Of course $f$ is smooth, and to prove the claimed inequality it suffices to prove that $f$ is logarithmically concave --- that is, \begin{align}(f')^2\geq f\cdot f''.\end{align} Written another way (in the notation of Odlyzko-Richmond \cite{odlyzkorichmond}), letting \begin{align}J_m := f^{(m)}(\ell) = (-2\pi i)^m\sum_{a=1}^b \int_{-\frac{1}{2}}^{\frac{1}{2}} d\theta\ (\theta + i\alpha)^m \frac{\prod_{k=0}^{a-1} (1 - e(\theta + i\alpha)^{n-k})}{\prod_{k=1}^a (1 - e(\theta + i\alpha)^k)} e\left(-\left(\ell - \frac{a(a+1)}{2}\right)(\theta + i\alpha)\right),\end{align} we will show that $J_1^2 > J_0 J_2$.

To do this we will calculate $J_m$ ($0\leq m\leq 2$) to leading order.

In this vein, note that, for $0\leq k\leq b$,
\begin{align}
1 - e(\theta + i\alpha)^{n-k} &= 1 - e(n\theta + i) + O_b\left(|\theta| + \frac{1}{n}\right)\\&= \left(1 - e(n\theta + i)\right)\left(1 + O_b\left(|\theta| + \frac{1}{n}\right)\right),
\end{align}
since $1 - e(n\theta + i)\gg 1$. Similarly,
\begin{align}
1 - e(\theta + i\alpha)^k &= k(-2\pi i\theta + 2\pi\alpha) + O_b\left(\left(|\theta| + \frac{1}{n}\right)^2\right) \\&= k(-2\pi i\theta + 2\pi\alpha)\left(1 + O_b\left(|\theta| + \frac{1}{n}\right)\right).
\end{align}
Thus
\begin{align}
J_m &= (-2\pi i)^m\sum_{a=1}^b \frac{1}{a!}\int_{-\frac{1}{2}}^{\frac{1}{2}} d\theta\ (\theta + i\alpha)^m \left(\frac{1 - e(n\theta + i)}{-2\pi i\theta + 2\pi\alpha}\right)^a e\left(-\left(\ell - \frac{a(a+1)}{2}\right)(\theta + i\alpha)\right)\nonumber\\&\quad\quad + O_b\left(\sum_{a=1}^b \int_{-\frac{1}{2}}^{\frac{1}{2}} |\theta + i\alpha|^{m+1-a} d\theta\right).
\end{align}

Via $|\theta + i\alpha|\asymp |\theta|$ if $|\theta| > \frac{1}{n}$ and $\asymp \frac{1}{n}$ otherwise, by splitting the integral into two integrals over $\left[0,\frac{1}{n}\right]$ and $\left[\frac{1}{n},\frac{1}{2}\right]$, respectively, we obtain the bound \begin{align}\sum_{a=1}^b \int_{-\frac{1}{2}}^{\frac{1}{2}} |\theta + i\alpha|^{m+1-a} d\theta&\ll_b \sum_{a=1}^b \left(n^{a-m-2} + \int_{\frac{1}{n}}^{\frac{1}{2}} \theta^{m+1-a} d\theta\right)\\&\ll_b n^{b-m-2}\end{align} (here we use that $b-m-2 > 0$ since $b\geq 5$ and $m\leq 2$).

Hence we obtain that
\begin{align}
J_m &= (-2\pi i)^m\sum_{a=1}^b \frac{1}{a!}\int_{-\frac{1}{2}}^{\frac{1}{2}} d\theta\ (\theta + i\alpha)^m \left(\frac{1 - e(n\theta + i)}{-2\pi i\theta + 2\pi\alpha}\right)^a e\left(-\left(\ell - \frac{a(a+1)}{2}\right)(\theta + i\alpha)\right) \nonumber\\&\quad\quad+ O_b\left(n^{b-m-2}\right).
\end{align}

As a final step, we bound the terms with $a<b$ trivially (in exactly the same way) to obtain
\begin{align}
&(-2\pi i)^m\sum_{a=1}^{b-1} \frac{1}{a!}\int_{-\frac{1}{2}}^{\frac{1}{2}} d\theta\ (\theta + i\alpha)^m \left(\frac{1 - e(n\theta + i)}{-2\pi i\theta + 2\pi\alpha}\right)^a e\left(-\left(\ell - \frac{a(a+1)}{2}\right)(\theta + i\alpha)\right)\\&\quad\quad\ll_b \sum_{a=1}^{b-1} \int_{-\frac{1}{2}}^{\frac{1}{2}} d\theta\ |\theta + i\alpha|^{m-a}\\&\quad\quad\ll_b n^{b-m-2}.
\end{align}

Thus we find that
\begin{align}
J_m &= \frac{(-2\pi i)^m}{b!}\int_{-\frac{1}{2}}^{\frac{1}{2}} d\theta\ (\theta + i\alpha)^m \left(\frac{1 - e(n\theta + i)}{-2\pi i\theta + 2\pi\alpha}\right)^b e\left(-\left(\ell - \frac{b(b+1)}{2}\right)(\theta + i\alpha)\right) \nonumber\\&\quad\quad+ O_b\left(n^{b-m-2}\right).
\end{align}

Now we study the main term. Via $\theta\mapsto \theta/n$, we get
\begin{align}
J_m &= \frac{(-2\pi i)^m}{b!} n^{b-m-1}\int_{-\frac{n}{2}}^{\frac{n}{2}} d\theta\ (\theta + i)^m \left(\frac{1 - e(\theta + i)}{-2\pi i\theta + 2\pi}\right)^b e\left(-\left(\frac{\ell}{n} - \frac{b(b+1)}{2n}\right)(\theta + i)\right) \nonumber\\&\quad\quad+ O_b\left(n^{b-m-2}\right).
\end{align}

Note that (again bounding trivially) \begin{align}\int_{|\theta| > \frac{n}{2}} d\theta\ (\theta + i)^m \left(\frac{1 - e(\theta + i)}{-2\pi i\theta + 2\pi}\right)^b e\left(-\left(\frac{\ell}{n} - \frac{b(b+1)}{2n}\right)(\theta + i)\right)\ll_b n^{-(b-m-1)}.\end{align} Thus we may extend the integral to all of $\R$ and absorb the error into the existing error term. That is,
\begin{align}
J_m &= \frac{(-2\pi i)^m}{b!} n^{b-m-1}\int_\R d\theta\ (\theta + i)^m \left(\frac{1 - e(\theta + i)}{-2\pi i\theta + 2\pi}\right)^b e\left(-\left(\frac{\ell}{n} - \frac{b(b+1)}{2n}\right)(\theta + i)\right) \nonumber\\&\quad\quad+ O_b\left(n^{b-m-2}\right).
\end{align}

A contour shift from $\R + i$ to $\R$ ($\frac{1 - e(z)}{z}$ is entire) tells us that
\begin{align}
&\int_\R d\theta\ (\theta + i)^m \left(\frac{1 - e(\theta + i)}{-2\pi i\theta + 2\pi}\right)^b e\left(-\left(\frac{\ell}{n} - \frac{b(b+1)}{2n}\right)(\theta + i)\right) \\&= \int_\R d\theta\ \theta^m \left(\frac{1 - e(\theta)}{-2\pi i\theta}\right)^b e\left(-\left(\frac{\ell}{n} - \frac{b(b+1)}{2n}\right)\theta\right).
\end{align}

That is (via $\theta\mapsto -\theta$),
\begin{align}
J_m &= \frac{n^{b-m-1}}{b!}\int_\R d\theta\ (2\pi i\theta)^m \left(\frac{1 - e(-\theta)}{2\pi i\theta}\right)^b e\left(\left(\frac{\ell}{n} - \frac{b(b+1)}{2n}\right)\theta\right) \nonumber\\&\quad\quad+ O_b\left(n^{b-m-2}\right).
\end{align}

Immediately we recognize $\frac{1 - e(-\theta)}{2\pi i\theta}$ as the Fourier transform of $\chi_{[0,1]}$, the characteristic function of $[0,1]$. That is, writing $I:=\chi_{[0,1]}^{*b}$ for the $b$-fold convolution of this characteristic function with itself (this is the probability density function of the Irwin-Hall distribution --- i.e., that of the sum of $b$ independent uniform random variables on $[0,1]$), we find that\footnote{Here $[(I')^2 - I I'']\left(\frac{\ell}{n} - \frac{b(b+1)}{2n}\right)$ is the evaluation of the function $(I')^2 - I I''$ at the point $\frac{\ell}{n} - \frac{b(b+1)}{2n}$, rather than the product of two terms, as the typesetting suggests. The same expression occurs below.}
\begin{align}
J_1^2 - J_0 J_2 &= \frac{n^{2b-4}}{(b!)^2} \left([(I')^2 - I I'']\left(\frac{\ell}{n} - \frac{b(b+1)}{2n}\right)\right) \nonumber\\&\quad\quad+ O_b\left(n^{2b-5}\right).
\end{align}

Since the error term is of lower order than the main term, it suffices to show that \begin{align}[(I')^2 - I I'']\left(\frac{\ell}{n} - \frac{b(b+1)}{2n}\right)\gg_b 1.\end{align} Since, for $n\gg_b 1$, \begin{align}\frac{1}{2}\leq \frac{1728}{1729} - O_b(n^{-1})\leq \frac{\ell}{n} - \frac{b(b+1)}{2n}\leq b - \frac{1728}{1729} + O_b(n^{-1})\leq b - \frac{1}{2},\end{align} it suffices to show that \begin{align}(I')^2 - I I''\gg_b 1\end{align} on $\left[\frac{1}{2}, b - \frac{1}{2}\right]$.

We will show that \begin{align}(-\log{I})''\gg_b 1\end{align} on this interval by a method found in \cite{henningssonastrom} (--- the argument here is exactly the same. We provide it only for completeness.).

\begin{lem}\label{convolution}
Let $f,g > 0$ be $C^2$ and such that \begin{align}(-\log{f})''&\geq \frac{1}{A},\\(-\log{g})''&\geq \frac{1}{B}.\end{align} Then \begin{align}(-\log(f*g))''\geq \frac{1}{A+B}.\end{align}
\end{lem}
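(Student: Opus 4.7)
Write $f = e^{-\phi}$ and $g = e^{-\psi}$, so that the hypotheses read $\phi'' \geq 1/A$ and $\psi'' \geq 1/B$. Set $F := -\log(f*g)$; the goal is to show $F'' \geq 1/(A+B)$ pointwise. The plan is to compute $F''$ by differentiation under the integral, recognize it as an expectation minus a variance with respect to a natural log-concave density, and then apply a one-dimensional Brascamp--Lieb variance inequality.

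Let $W(x,t) := \phi(t) + \psi(x-t)$, and let $p_x(t) := e^{-W(x,t)}/(f*g)(x)$, a probability density in $t$ parametrized by $x$. Differentiating $F(x) = -\log \int_\R e^{-W(x,t)}\,dt$ twice (and using $\partial_x W = \psi'(x-t)$) gives, after collecting terms,
\begin{align*}
F''(x) = \E_{p_x}[\psi''(x-t)] - \mathrm{Var}_{p_x}(\psi'(x-t)).
\end{align*}
Now $p_x$ is log-concave in $t$ with $\partial_t^2 W = \phi''(t) + \psi''(x-t) > 0$, so by the one-dimensional Brascamp--Lieb inequality applied to $u(t) := \psi'(x-t)$,
\begin{align*}
\mathrm{Var}_{p_x}(\psi'(x-t)) \leq \E_{p_x}\!\left[\frac{\psi''(x-t)^2}{\phi''(t) + \psi''(x-t)}\right].
\end{align*}
Plugging this in and simplifying yields
\begin{align*}
F''(x) \geq \E_{p_x}\!\left[\frac{\phi''(t)\,\psi''(x-t)}{\phi''(t)+\psi''(x-t)}\right] = \E_{p_x}\!\left[\left(\frac{1}{\phi''(t)}+\frac{1}{\psi''(x-t)}\right)^{-1}\right] \geq \frac{1}{A+B},
\end{align*}
the last step using $\phi'' \geq 1/A$ and $\psi'' \geq 1/B$.

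The one nontrivial ingredient is the one-dimensional Brascamp--Lieb inequality, which I would prove in a few lines rather than cite: for $u$ centered under $p_x$, solve the Poisson equation $-v'' + W'v' = u$; integration by parts then yields both $\E_{p_x}[u^2] = \E_{p_x}[u'v']$ and the Bochner-type identity $\E_{p_x}[u^2] = \E_{p_x}[(v'')^2 + W''(v')^2] \geq \E_{p_x}[W''(v')^2]$, whereupon a single Cauchy--Schwarz closes the argument. Every other step of the proof is routine differentiation under the integral or algebra; the Brascamp--Lieb step is the only real obstacle.
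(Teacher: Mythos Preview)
Your argument is correct and takes a genuinely different route from the paper's. The paper multiplies $f$ and $g$ by Gaussians to reduce to bare log-concavity: setting $F(x)=f(x)e^{x^2/(2A)}$ and $G(x)=g(x)e^{x^2/(2B)}$ (both log-concave by hypothesis), one checks that $e^{x^2/(2(A+B))}(f*g)(x)$ is the $t$-marginal of a function that is log-concave jointly in $(x,t)$ --- the exponential factor contributes a quadratic whose Hessian is positive semidefinite --- and then invokes Pr\'ekopa--Leindler to conclude. Your approach instead differentiates under the integral to write $(-\log(f*g))''$ as an expectation minus a variance and controls the variance by the one-dimensional Brascamp--Lieb inequality. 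The paper's proof is shorter and involves no analysis beyond a $2\times 2$ Hessian computation, but treats Pr\'ekopa--Leindler as a black box; yours is more hands-on, proves its key inequality from scratch, and actually yields the sharper pointwise estimate
\[
(-\log(f*g))''(x)\ \geq\ \E_{p_x}\!\left[\left(\frac{1}{\phi''(t)}+\frac{1}{\psi''(x-t)}\right)^{-1}\right],
\]
which can strictly exceed $1/(A+B)$ when the curvature bounds are not saturated. The two ingredients (Pr\'ekopa--Leindler and Brascamp--Lieb) are of comparable depth, and indeed each can be derived from the other, so neither proof is really more elementary.
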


Assuming this, since by inspection $(-\log{\chi_{[0,1]}^{*a}})''$ is nonzero on $(0,a)$ for $a=4,5,6,7$, by compactness \begin{align}(-\log{\chi_{[0,1]}^{*a}})''\gg_b 1\end{align} on $\left[\frac{1}{2\floor{\frac{b}{4}}}, a - \frac{1}{2\floor{\frac{b}{4}}}\right]$ for $a=4,5,6,7$. Hence, by the lemma, \begin{align}(-\log{I})''\gg_b 1\end{align} on $\left[\frac{1}{2}, b - \frac{1}{2}\right]$ since \begin{align}I = (\chi_{[0,1]}^{*4})^{*(\floor{\frac{b}{4}} - 1)} * \chi_{[0,1]}^{*(4 + (b\bmod{4}))}.\end{align} Therefore, taking $n\gg_b 1$, we see that \begin{align}J_1^2 > J_0 J_2\end{align} for all $\frac{1728}{1729} n\leq \ell\leq \left(b - \frac{1728}{1729}\right)n$, establishing log-concavity in this interval and hence unimodality for all $\ell$.

Thus it remains to prove the lemma.

\begin{proof}[Proof of Lemma \ref{convolution}.]
Let \begin{align}F(x):=f(x) e^{\frac{x^2}{2A}}\end{align} and \begin{align}G(x):=g(x) e^{\frac{x^2}{2B}}.\end{align} Note that $F$ and $G$ are, by hypothesis, log-concave. But
\begin{align}
e^{\frac{x^2}{2(A+B)}} (f*g)(x) &= \int_\R e^{\frac{x^2}{2(A+B)}} f(t) g(x-t) dt\\&= \int_\R \exp\left(-\frac{t^2}{2A} - \frac{(x-t)^2}{2B} + \frac{x^2}{2(A+B)}\right) F(t) G(x-t) dt.
\end{align}
Notice that the Hessian of the exponent is \begin{align}\left(\begin{array}{cc} \frac{1}{A} + \frac{1}{B} & \frac{1}{B}\\ \frac{1}{B} & \frac{A}{B(A+B)}\end{array}\right),\end{align} which is nonnegative-definite (it has zero determinant and positive trace). Hence the function \begin{align}h(x,t):=\exp\left(-\frac{t^2}{2A} - \frac{(x-t)^2}{2B} + \frac{x^2}{2(A+B)}\right) F(t) G(x-t)\end{align} is the product of three log-concave functions of $(x,t)$, so that it is log-concave.

By the Prekopa-Leindler inequality, it follows that \begin{align}\int_\R h(x,t) dt = e^{\frac{x^2}{2(A+B)}} (f*g)(x)\end{align} is log-concave as well.
\end{proof}

\section{Acknowledgements}

This research was conducted at the University of Minnesota Duluth REU program, supported by NSF/DMS grant 1062709 and NSA grant H98230-11-1-0224. I would like to thank Joe Gallian for his constant encouragement and for the wonderful environment for research at UMD. I would also like to thank Anirudha Balasubramanian for greatly helpful discussions about the Irwin-Hall distribution.

\nocite{*}

\bibliography{unimodalityandaconjectureofstanleyzanello}{}
\bibliographystyle{plain}

\ \\

\end{document}